\newtheorem{definition}{Definition}[section]
\newtheorem{theorem}{Theorem}[section]
\newtheorem{corollary}{Corollary}[section]
\newcommand{\supp}{\textup{supp}}
\numberwithin{equation}{section}
\begin{document}
\title{Majorization results for zeros of orthogonal polynomials \thanks{This research was supported by KU Leuven research grant OT/12/073 and FWO research project G.0864.16N.}}
\author{Walter Van Assche \\ KU Leuven}
\date{\today}
\maketitle

\begin{abstract}
We show that the zeros of consecutive orthogonal polynomials $p_n$ and $p_{n-1}$ are linearly connected by a doubly stochastic matrix for which the entries are explicitly computed in terms of Christoffel numbers.
We give similar results for the zeros of $p_n$ and the associated polynomial $p_{n-1}^{(1)}$ and for
the zeros of the polynomial obtained by deleting the $k$th row and column $(1 \leq k \leq n)$ in the corresponding Jacobi matrix.
\end{abstract}

\section{Introduction}

\subsection{Majorization}

Recently S.M. Malamud \cite{Malamud0,Malamud1} and R. Pereira \cite{Pereira} have given a nice extension of the
Gauss-Lucas theorem that the zeros of the derivative $p'$ of a polynomial $p$ lie in the convex hull of the roots of $p$.
They both used the theory of majorization of sequences to show that if $x=(x_1,x_2,\ldots,x_n)$ are the zeros of a polynomial $p$ of degree $n$, and $y=(y_1,\ldots,y_{n-1})$ are the zeros of its derivative $p'$, then there exists a doubly stochastic
$(n-1) \times n$ matrix $S$ such that $y = Sx$ (\cite[Thm. 4.7]{Malamud1}, \cite[Thm. 5.4]{Pereira}). A rectangular $(n-1) \times n$ matrix $S = (s_{i,j})$ is said to be doubly stochastic if $s_{i,j} \geq 0$ and
\[   \sum_{j=1}^n s_{i,j} = 1, \quad \sum_{i=1}^{n-1} s_{i,j} = \frac{n-1}{n}. \]  
In this paper we will use the notion of majorization to rephrase some known results for the zeros of orthogonal polynomials on the real line.
It is an open problem to see what can be said about the zeros of orthogonal polynomials in the complex plane in terms of majorization.

Let us begin by defining the notion of majorization. An excellent source of information on majorization is the book by Marshall and Olkin (and Arnold, who was added for the second edition) \cite{Marshall}. See also Horn and Johnson \cite[Def. 4.3.24]{HornJohnson}.
\begin{definition}
let $x=(x_1,x_2,\ldots,x_n) \in \mathbb{R}^n$ and $y=(y_1,y_2,\ldots,y_n) \in \mathbb{R}^n$. Then the vector $y$ is said to majorize the vector $x$, 
or $x$ is majorized by $y$ 
(notation: $x \prec y$) if
\[    \hat{x}_1+ \cdots + \hat{x}_j \leq \hat{y}_1 + \cdots + \hat{y}_j, \qquad 1 \leq j \leq n-1, \]
and
\[   \hat{x}_1+ \cdots + \hat{x}_n = \hat{y}_1 + \cdots + \hat{y}_n,  \]
where $\hat{x} = (\hat{x}_1,\ldots,\hat{x}_n)$ and $\hat{y} = (\hat{y}_1,\ldots,\hat{y}_n)$ contain the components of $x$ and $y$ in decreasing order. 
\end{definition}

The following result (of Hardy, Littlewood and P\'olya) gives an alternative way to define majorization in terms of doubly stochastic matrices.
Recall that a square matrix $A = (a_{i,j})_{i,j=1}^n$ of order $n$ is doubly stochastic if $a_{i,j} \geq 0$ and
\[   \sum_{i=1}^n a_{i,j} = 1, \quad \sum_{j=1}^n a_{i,j} = 1, \]
i.e., the elements in every row and every column add up to $1$.

\begin{theorem}
Let $x = (x_1,\ldots,x_n)$ and $y = (y_1,\ldots,y_n)$. Then $x \prec y$ if and only if there exists a doubly stochastic matrix $A$ such that $x=Ay$.
\end{theorem}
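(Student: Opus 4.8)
The plan is to establish the two implications separately, the forward direction ($x=Ay \Rightarrow x\prec y$) being routine and the converse being the heart of the matter.

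\emph{From $x=Ay$ to $x\prec y$.} Since majorization is unaffected by permuting the entries of $x$ or of $y$, and since conjugating a doubly stochastic matrix by permutation matrices produces another doubly stochastic matrix, I may assume $x=(x_1,\dots,x_n)$ and $y=(y_1,\dots,y_n)$ are both in decreasing order. Writing $A=(a_{i,\ell})$, for a fixed $j$ we have
\[ \sum_{i=1}^j x_i = \sum_{i=1}^j \sum_{\ell=1}^n a_{i,\ell} y_\ell = \sum_{\ell=1}^n c_\ell\, y_\ell, \qquad c_\ell := \sum_{i=1}^j a_{i,\ell}. \]
The column sums of $A$ being $1$ force $0\le c_\ell\le 1$, while the row sums give $\sum_\ell c_\ell = j$. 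A weighted sum $\sum_\ell c_\ell y_\ell$ subject to these constraints is largest when the full weight $1$ sits on the $j$ largest $y_\ell$ (a fractional-knapsack/exchange argument), so $\sum_{i=1}^j x_i \le \sum_{i=1}^j y_i$; for $j=n$ every $c_\ell$ equals $1$, giving equality. Hence $x\prec y$. (Equivalently, by Birkhoff's theorem $A$ is a convex combination of permutation matrices, each $Py$ is a rearrangement of $y$ and hence trivially majorized by $y$, and the set of vectors majorized by $y$ is convex.)

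\emph{From $x\prec y$ to $x=Ay$.} Again reduce to $x,y$ decreasing. Call $T$ a \emph{$T$-transform} if $T=\lambda I + (1-\lambda)Q$ with $0\le\lambda\le 1$ and $Q$ the matrix of a transposition; such $T$ are doubly stochastic, as is any product of them. I claim that if $x\neq y$ there is a $T$-transform acting in two coordinates with $x\prec Ty$ and with $Ty$ agreeing with $x$ in strictly more coordinates than $y$ does. Let $j$ be the largest index with $x_j<y_j$ and $k$ the smallest index $>j$ with $x_k>y_k$; the partial-sum inequalities together with $\sum x_i=\sum y_i$ guarantee that both indices exist, that $x_i=y_i$ for $j<i<k$, and that $y_j>x_j\ge x_k>y_k$. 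Set $\delta=\min\{y_j-x_j,\;x_k-y_k\}>0$ and let $y'$ be $y$ with the $j$th entry lowered by $\delta$ and the $k$th raised by $\delta$; this is $Ty$ for the $T$-transform in coordinates $j,k$ with $1-\lambda=\delta/(y_j-y_k)\in(0,1)$. One checks directly that $y'$ is still decreasing, that $\sum y'_i=\sum x_i$, and that every partial sum of $y'$ still dominates the corresponding one of $x$ — the indices $m$ with $j\le m<k$ being the only subtle cases, where one splits the sum at $j$, uses $\delta\le y_j-x_j$ together with $\sum_{i\le j-1}y_i\ge\sum_{i\le j-1}x_i$, and uses that $x$ and $y$ coincide strictly between $j$ and $k$. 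Thus $x\prec y'$, and by the choice of $\delta$ at least one of $y'_j=x_j$, $y'_k=x_k$ holds while no coordinate on which $y$ already agreed with $x$ is disturbed. Iterating (at most $n-1$ times) yields $x=T_N\cdots T_1 y$, and $A:=T_N\cdots T_1$, re-conjugated by the permutation matrices used in the reduction, is the required doubly stochastic matrix.

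The main obstacle is this last construction, and within it the verification that a single $T$-transform preserves both the monotone ordering of $y'$ and \emph{all} the majorization inequalities $x\prec y'$ simultaneously; everything else — Birkhoff's theorem, and the facts that products of doubly stochastic matrices and conjugates of them by permutation matrices are again doubly stochastic — is standard bookkeeping.
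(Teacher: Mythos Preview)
Your argument is correct and is essentially the classical Hardy--Littlewood--P\'olya proof via $T$-transforms; the forward direction, the existence of the indices $j<k$, the monotonicity of $y'$, and the preservation of the majorization inequalities are all verified as you describe. One cosmetic remark: at the end you do not literally \emph{conjugate} by permutation matrices but rather pre- and post-multiply by (possibly different) permutation matrices, which is still doubly stochastic.

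There is, however, nothing to compare against: the paper does not prove this theorem. It is quoted as a known result, with references to Marshall--Olkin--Arnold \cite[Thm.~2.B.2]{Marshall} and Horn--Johnson \cite[Thm.~4.3.33]{HornJohnson}, and is used only as background for the notion of majorization. Your write-up is precisely the standard proof one finds in those references, so in that sense you have supplied what the paper deliberately omitted.
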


See \cite[Thm. 2.B.2]{Marshall},  \cite[Thm. 4.3.33]{HornJohnson}. Note that the doubly stochastic matrix $A$ need not be unique.

\subsection{Orthogonal polynomials}
Let $\mu$ be a positive measure on the real line for which all the moments exist. To simplify notation, we will assume that $\mu$ is a probability
measure so that $\mu(\mathbb{R})=1$.
The orthogonal polynomials $p_n$ for this measure
satisfy the orthogonality relations
\[   \int p_n(x)p_m(x)\, d\mu(x) = \delta_{m,n}, \]
and we denote the orthonormal polynomials by $p_n(x) = \gamma_n x^n +\cdots$, with the convention that $\gamma_n > 0$.
Orthogonal polynomials on the real line always satisfy a three term recurrence relation
\begin{equation}  \label{3term}
   xp_n(x) = a_{n+1} p_{n+1}(x) + b_n p_n(x) + a_n p_{n-1}(x), 
\end{equation}
with $a_n >0$ for $n \geq 1$ and $b_n \in \mathbb{R}$ for $n \geq 0$. 
These recurrence coefficients are given by
\begin{eqnarray}
     a_n &=& \int xp_n(x)p_{n-1}(x)\, d\mu(x) = \frac{\gamma_{n-1}}{\gamma_n} ,  \label{an} \\
     b_n &=& \int x p_n^2(x)\, d\mu(x).   \label{bn}
\end{eqnarray}
The zeros of $p_n$ are all real and simple and we will denote them by
$x_{1,n} < x_{2,n} < \cdots < x_{n,n}$, hence in increasing order. It is well known that the zeros of $p_{n-1}$ and $p_n$ interlace, i.e.,
\[      x_{j,n} < x_{j,n-1} < x_{j+1,n}, \qquad 1 \leq j \leq n-1.  \]
The zeros of $p_n$ are also the eigenvalues of the tridiagonal matrix (Jacobi matrix)
\begin{equation}  \label{jacobi}
      J_n = \begin{pmatrix}   b_0 & a_1 & 0   & 0     & \cdots &   0  \\
                                a_1 & b_1 & a_1 & 0   & \cdots &   0 \\
                                 0  & a_2 & b_2 & a_3 &  &   0  \\
                                 \vdots  &  &  \ddots & \ddots & \ddots &  \vdots \\
                                 0  & \cdots & 0 & a_{n-2} & b_{n-2} & a_{n-1} \\
                                 0  & \cdots & 0& 0 &   a_{n-1} & b_{n-1} 
             \end{pmatrix}                
\end{equation}
and the interlacing of the zeros of $p_n$ and $p_{n-1}$ hence corresponds to the interlacing of the eigenvalues of $J_n$ and $J_{n-1}$.
Another consequence is that for the trace of $J_n$ one has
\[   \textrm{Tr } J_n = x_{1,n}+x_{2,n} + \cdots x_{n,n} = b_0 + b_1 + \cdots + b_{n-1}, \]
which gives a relation between the sum of the zeros of $p_n$ and a partial sum of the recurrence coefficients $(b_n)_{n \geq 0}$.
In particular we find
\begin{equation}  \label{sumxb}
      x_{1,n-1}+x_{2,n-1} + \cdots x_{n-1,n-1} + b_{n-1} = x_{1,n} + x_{2,n} + \cdots + x_{n,n}.  
\end{equation} 
The interlacing of the zeros of $p_{n-1}$ and $p_n$, together with \eqref{sumxb} then imply that the vector 
$(x_{1,n-1},\ldots,x_{n-1,n-1},b_{n-1})$ is majorized
by $(x_{1,n},x_{2,n}, \ldots, x_{n,n})$. In Section \ref{sec2} we will give an explicit expression for a doubly stochastic matrix $A$ for which
\[   \begin{pmatrix}  x_{1,n-1} \\ x_{2,n-1} \\ \vdots \\ x_{n-1,n-1} \\ b_{n-1} \end{pmatrix}
                    = A \begin{pmatrix}  x_{1,n} \\ x_{2,n} \\ \vdots \\ x_{n-1,n} \\ x_{n,n}  \end{pmatrix} .  \]

Another interesting interlacing property involves the zeros of the associated polynomial 
\[   p_{n-1}^{(1)}(z) = a_1 \int \frac{p_n(z)-p_n(x)}{z-x}\, d\mu(x),  \]
see, e.g., \cite{WVA}.
This polynomial appears naturally as the numerator of the Pad\'e approximant to the function
\[    f(z) = a_1 \int \frac{d\mu(x)}{z-x} = a_1 \sum_{k=0}^\infty \frac{m_k}{z^{k+1}}, \qquad m_k = \int x^k\, d\mu(x), \]
near infinity, i.e.,
\[     p_n(z) f(z) - p_{n-1}^{(1)}(z) =  \mathcal{O}(1/z^{n+1}), \qquad z \to \infty. \] 
The zeros of $p_{n-1}^{(1)}$ are again real and simple and we denote them by  $y_{1,n-1} < y_{2,n-1} < \cdots < y_{n-1,n-1}$. 
They interlace with the zeros of $p_n$ in the sense that
\[     x_{j,n} < y_{j,n-1} < x_{j+1,n}, \qquad  1 \leq j \leq n-1.  \]
In fact, the partial fractions decomposition of the Pad\'e approximant is
\begin{equation}  \label{parfrac}
   \frac{p_{n-1}^{(1)}(z)}{p_n(z)} = a_1 \sum_{k=1}^n \frac{\lambda_{k,n}}{z-x_{k,n}}, 
\end{equation}
where the $\lambda_{k,n}$ are the Christoffel numbers 
\begin{equation}  \label{lambda}
   \lambda_{k,n} = \frac{1}{\sum_{j=0}^{n-1} p_j^2(x_{k,n})} = \frac{1}{a_n p_n'(x_{k,n})p_{n-1}(x_{k,n})} .
\end{equation} 
The zeros of $p_{n-1}^{(1)}$ are equal to the eigenvalues of the Jacobi matrix $J_n^{(1)}$ obtained from $J_n$ by deleting the first row and first column:
\begin{equation}  \label{jacobi2}
       J_n^{(1)} = \begin{pmatrix}   b_1 & a_2 & 0   & 0     & \cdots &   0  \\
                                a_2 & b_2 & a_3 & 0   & \cdots &   0 \\
                                 0  & a_3 & b_3 & a_4 &  &   0  \\
                                 \vdots  &  &  \ddots & \ddots & \ddots &  \vdots \\
                                 0  & \cdots & 0 & a_{n-2} & b_{n-2} & a_{n-1} \\
                                 0  & \cdots & 0& 0 &   a_{n-1} & b_{n-1} 
             \end{pmatrix}   . 
\end{equation}             
For the trace we therefore have
\[   \textrm{Tr } J_n^{(1)} = y_{1,n-1} + y_{2,n-1} + \cdots + y_{n-1,n-1} = b_1+b_2+\cdots+b_{n-1}, \]
and hence
\begin{equation}  \label{sumyb}
     y_{1,n-1} + y_{2,n-1} + \cdots + y_{n-1,n-1} + b_0 = x_{1,n}+x_{2,n} + \cdots + x_{n,n} . 
\end{equation}
Combining the interlacing property and \eqref{sumyb} we then see that $(y_{1,n-1},\ldots,y_{n-1,n-1},b_0)$ is majorized by
$(x_{1,n},x_{2,n},\ldots,x_{n,n})$. In Section \ref{sec3} we will give an explicit expression for a doubly stochastic matrix $B$ for which
\[   \begin{pmatrix}  y_{1,n-1} \\ y_{2,n-1} \\ \vdots \\ y_{n-1,n-1} \\ b_{0} \end{pmatrix}
                    = B \begin{pmatrix}  x_{1,n} \\ x_{2,n} \\ \vdots \\ x_{n-1,n} \\ x_{n,n}  \end{pmatrix} .  \]
Finally, in Section \ref{sec4} we will prove a similar result for the zeros of the characteristic polynomial of the matrix obtained by deleting the $k$th row and
column of the Jacobi matrix $J_n$.

We will frequently use Gaussian quadrature using the zeros of orthogonal polynomials. Suppose that  $\textrm{supp}(\mu) \subset [a,b]$ and that
$\{x_{j,n}, 1 \leq j \leq n\}$ are the zeros of the
orthogonal polynomial $p_n$ and $\{\lambda_{j,n}, 1 \leq j \leq n\}$ the corresponding Christoffel numbers, then for every function
$f \in C^{2n}([a,b])$
\begin{equation}  \label{gauss}
    \int f(x) \, d\mu(x) = \sum_{j=1}^n \lambda_{j,n} p(x_{j,n}) + \frac{f^{(2n)}(\xi)}{(2n)! \gamma_n^2}, 
\end{equation}
where $\gamma_n$ is the leading coefficient of $p_n$ and $\xi \in (a,b)$ (see, e.g., \cite[\S 1.4.2]{Gautschi}, \cite[\S 3.4]{Szego}).
The quadrature sum therefore gives the integral exactly whenever $f$ is a polynomial of degree $\leq 2n-1$. The Christoffel numbers $\lambda_{j,n}$
given in \eqref{lambda} are also given by
\begin{equation}  \label{lambdaint} 
  \lambda_{j,n} = \int_a^b \frac{p_n^2(x)}{(x-x_{j,n})^2 [p_n'(x_{j,n}]^2}\, d\mu(x) > 0. 
\end{equation}  

\section{Zeros of consecutive orthogonal polynomials}   \label{sec2}

\begin{theorem}  \label{thm:2.1}
Let $(x_{1,n-1},\ldots,x_{n-1,n-1})$ be the zeros of $p_{n-1}$ and $(x_{1,n},\ldots,x_{n,n})$ be the zeros of $p_n$, 
and let $(a_{n+1},b_n)_{n \geq 0}$ be the recurrence coefficients in \eqref{3term} and \eqref{jacobi}. Then  
\begin{equation}  \label{x=Ax}
  \begin{pmatrix}  x_{1,n-1} \\ x_{2,n-1} \\ \vdots \\ x_{n-1,n-1} \\ b_{n-1} \end{pmatrix}
                    = A \begin{pmatrix}  x_{1,n} \\ x_{2,n} \\ \vdots \\ x_{n-1,n} \\ x_{n,n}  \end{pmatrix} ,  
\end{equation}
where $A = (a_{i,j})_{i,j=1}^n$ is a doubly stochastic matrix with entries
\begin{equation}  \label{A}
   a_{i,j} = \begin{cases}
\displaystyle \frac{a_n^2 \lambda_{j,n} p_{n-1}^2(x_{j,n})\lambda_{k,n-1} p_{n}^2(x_{i,n-1})}{(x_{j,n}-x_{i,n-1})^2}, & 1 \leq i \leq n-1, \\[10pt]
              \lambda_{j,n} p_{n-1}^2(x_{j,n}), & i = n. \end{cases}  
\end{equation}
\end{theorem}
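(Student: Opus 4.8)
The plan is to rewrite the matrix $A$ entirely in terms of the Christoffel--Darboux kernel
\[
   K_n(x,y) = \sum_{k=0}^{n-1} p_k(x)p_k(y) = a_n\,\frac{p_n(x)p_{n-1}(y)-p_{n-1}(x)p_n(y)}{x-y},
\]
and then to read off each required identity from Gaussian quadrature \eqref{gauss}. First I would note that, because $p_n(x_{j,n})=0$ and $p_{n-1}(x_{i,n-1})=0$, the Christoffel--Darboux formula gives
\[
   K_n(x_{j,n},x_{i,n-1})^2 = \frac{a_n^2\, p_{n-1}^2(x_{j,n})\,p_n^2(x_{i,n-1})}{(x_{j,n}-x_{i,n-1})^2},
\]
so that the entries in \eqref{A} become $a_{i,j} = \lambda_{i,n-1}\lambda_{j,n}\,K_n(x_{j,n},x_{i,n-1})^2$ for $1\le i\le n-1$ and $a_{n,j} = \lambda_{j,n}\,p_{n-1}^2(x_{j,n})$. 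Nonnegativity of every $a_{i,j}$ is then immediate from $a_n>0$ and the positivity of the Christoffel numbers.

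The last row of \eqref{x=Ax} is handled directly: since $p_{n-1}^2$ and $x\,p_{n-1}^2$ have degree at most $2n-1$, quadrature \eqref{gauss} at the zeros of $p_n$ is exact, so $\sum_{j=1}^n a_{n,j} = \int p_{n-1}^2\,d\mu = 1$ and $\sum_{j=1}^n a_{n,j}\,x_{j,n} = \int x\,p_{n-1}^2\,d\mu = b_{n-1}$ by \eqref{bn}. For $1\le i\le n-1$ the functions $x\mapsto K_n(x,x_{i,n-1})^2$ and $x\mapsto x\,K_n(x,x_{i,n-1})^2$ again have degree at most $2n-1$, so the same quadrature is exact; using the reproducing property $\int K_n(x,y)K_n(x,z)\,d\mu(x) = K_n(y,z)$ I get $\sum_j a_{i,j} = \lambda_{i,n-1}K_n(x_{i,n-1},x_{i,n-1})$, and since $p_{n-1}(x_{i,n-1})=0$ we have $K_n(x_{i,n-1},x_{i,n-1}) = \sum_{k=0}^{n-2}p_k^2(x_{i,n-1}) = 1/\lambda_{i,n-1}$ by \eqref{lambda}, hence $\sum_j a_{i,j}=1$. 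For the crucial identity $\sum_j a_{i,j}\,x_{j,n}=x_{i,n-1}$ I would establish the bilinear formula
\[
   \int x\,K_n(x,y)K_n(x,z)\,d\mu(x) = z\,K_n(y,z) - a_n\,p_n(z)\,p_{n-1}(y),
\]
obtained by expanding $x K_n(x,y)$ with the three-term recurrence \eqref{3term} and integrating term by term against $K_n(\cdot,z)$; setting $y=z=x_{i,n-1}$ and using $p_{n-1}(x_{i,n-1})=0$ gives $\int x\,K_n(x,x_{i,n-1})^2\,d\mu = x_{i,n-1}/\lambda_{i,n-1}$, so that $\sum_j a_{i,j}\,x_{j,n}=x_{i,n-1}$. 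This already proves \eqref{x=Ax}.

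It remains to show that the columns of $A$ also sum to $1$. The subtlety here is that $x\mapsto K_n(x_{j,n},x)^2$ has degree exactly $2n-2$, one too high for the $(n-1)$-point rule at the zeros of $p_{n-1}$ to be exact, so I must keep the remainder term in \eqref{gauss}. Since $K_n(x_{j,n},\cdot)$ has leading coefficient $\gamma_{n-1}\,p_{n-1}(x_{j,n})$, its square has $(2n-2)$-nd derivative equal to $(2n-2)!\,\gamma_{n-1}^2\,p_{n-1}^2(x_{j,n})$, so the $(n-1)$-point quadrature of $K_n(x_{j,n},\cdot)^2$ equals $\int K_n(x_{j,n},x)^2\,d\mu(x) - p_{n-1}^2(x_{j,n}) = 1/\lambda_{j,n} - p_{n-1}^2(x_{j,n})$, again by the reproducing property. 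Therefore
\[
   \sum_{i=1}^n a_{i,j} = \lambda_{j,n}\,p_{n-1}^2(x_{j,n}) + \lambda_{j,n}\Bigl(\frac{1}{\lambda_{j,n}} - p_{n-1}^2(x_{j,n})\Bigr) = 1,
\]
and together with the row sums and nonnegativity this shows $A$ is doubly stochastic. I expect the two points requiring genuine care to be the derivation of the bilinear formula above and, especially, this last column-sum step, where one must avoid invoking exactness of Gaussian quadrature and instead track the remainder precisely; everything else is bookkeeping with \eqref{gauss}, \eqref{lambda}, and Christoffel--Darboux.
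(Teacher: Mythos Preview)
Your proof is correct. The overall strategy coincides with the paper's: use $n$-point Gaussian quadrature (at the zeros of $p_n$) to establish the row identities and the action on the vector of zeros, and use $(n-1)$-point quadrature (at the zeros of $p_{n-1}$) with the remainder term to get the column sums. The paper carries this out directly on the rational expressions $p_{n-1}^2(x)/(x-x_{i,n-1})^2$ and $p_n^2(x)/(x-x_{j,n})^2$, obtaining $\sum_j a_{i,j}x_{j,n}=x_{i,n-1}$ by a simple add--and--subtract trick together with the orthogonality of $p_{n-1}$ to $p_{n-1}(x)/(x-x_{i,n-1})$. Your version repackages everything through the Christoffel--Darboux kernel: once you observe $a_{i,j}=\lambda_{i,n-1}\lambda_{j,n}K_n(x_{j,n},x_{i,n-1})^2$, the row sums follow from the reproducing property, the action on the zeros from the bilinear identity $\int xK_n(x,y)K_n(x,z)\,d\mu(x)=zK_n(y,z)-a_np_n(z)p_{n-1}(y)$, and the column sums from the same quadrature-with-remainder step (your integrand $K_n(x_{j,n},\cdot)^2$ differs from the paper's $p_n^2(\cdot)/(\cdot-x_{j,n})^2$ only by the factor $a_n^2p_{n-1}^2(x_{j,n})$). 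The kernel formulation is a bit more conceptual and suggests how the argument might generalize; the paper's version is slightly more elementary in that it avoids deriving the bilinear formula. Both routes are equally rigorous and of comparable length.
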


\begin{proof}
Let us consider the integral
\[    \int_a^b x \frac{p_{n-1}^2(x)}{(x-x_{i,n-1})^2}\, d\mu(x). \]
By adding and subtracting, we easily find
\begin{multline*}
  \int_a^b x \frac{p_{n-1}^2(x)}{(x-x_{i,n-1})^2}\, d\mu(x) \\
    = \int_a^b (x-x_{k,n}) \frac{p_{n-1}^2(x)}{(x-x_{i,n-1})^2}\, d\mu(x)
    + x_{i,n-1} \int_a^b \frac{p_{n-1}^2(x)}{(x-x_{i,n-1})^2}\, d\mu(x), 
\end{multline*}
and the first integral on the right vanishes by orthogonality since $p_{n-1}$ is orthogonal to the polynomial $p_{n-1}(x)/(x-x_{i,n-1})$ of
degree $n-2$. Hence
\begin{equation}  \label{xkn-1}
   x_{i,n-1} = \frac{\displaystyle \int_a^b x \frac{p_{n-1}^2(x)}{(x-x_{i,n-1})^2}\, d\mu(x)}
               {\displaystyle \int_a^b \frac{p_{n-1}^2(x)}{(x-x_{i,n-1})^2}\, d\mu(x)} . 
\end{equation}
Observe that by \eqref{lambdaint} (for $n-1$) the denominator is equal to
\[   \int_a^b  \frac{p_{n-1}^2(x)}{(x-x_{i,n-1})^2}\, d\mu(x) = \lambda_{i,n-1} [p_{n-1}'(x_{i,n-1})]^2 . \]
If we use \eqref{lambda} (for $n-1$) then
\begin{equation}  \label{lambda2}
    \lambda_{i,n-1} = \frac{1}{a_{n-1} p_{n-1}'(x_{i,n-1})p_{n-2}(x_{i,n-1})} = \frac{-1}{a_n p_{n-1}'(x_{i,n-1})p_n(x_{i,n-1})}, 
\end{equation}
where we used the recurrence relation \eqref{3term} which gives $a_{n-1}p_{n-2}(x_{i,n-1}) = - a_n p_n(x_{i,n-1})$. Therefore
the denominator is given by
\[   \int_a^b  \frac{p_{n-1}^2(x)}{(x-x_{i,n-1})^2}\, d\mu(x) = \frac{1}{a_n^2 \lambda_{i,n-1} p_{n}^2(x_{i,n-1})} . \]
For the numerator, we use the Gaussian quadrature rule with the zeros of $p_n$ to find
\[     \int_a^b x \frac{p_{n-1}^2(x)}{(x-x_{i,n-1})^2}\, d\mu(x) = \sum_{j=1}^n \lambda_{j,n} x_{j,n} \frac{p_{n-1}^2(x_{j,n})}{(x_{j,n}-x_{i,n-1})^2}, \]
so that \eqref{xkn-1} gives
\[    x_{i,n-1} = \sum_{j=1}^n a_{i,j} x_{j,n}, \qquad 1 \leq i \leq n-1, \]
with $a_{i,j}$ given in \eqref{A} for $1 \leq i \leq n-1$. Furthermore, we have
\[   b_{n-1} = \int_a^b x p_{n-1}^2(x)\, d\mu(x) = \sum_{j=1}^n \lambda_{j,n} x_{j,n} p_{n-1}^2(x_{j,n}) = \sum_{j=1}^n a_{n,j} x_{j,n}, \]
where we used \eqref{bn} and Gaussian quadrature. So \eqref{A} also holds for $i=n$.
It remains to show that the matrix $A$ is doubly stochastic. Clearly $a_{i,j} > 0$ because the Christoffel numbers are positive.
For the row sums we have
\[   \sum_{j=1}^n a_{i,j} = a_n^2 \lambda_{i,n-1} p_{n}^2(x_{i,n-1}) \sum_{j=1}^n \lambda_{j,n} \frac{p_{n-1}^2(x_{j,n})}{(x_{j,n}-x_{i,n-1})^2},
\]
and by the Gaussian quadrature rule, this is
\[   a_n^2 \lambda_{i,n-1} p_{n}^2(x_{i,n-1}) \int_a^b \frac{p_{n-1}^2(x)}{(x-x_{i,n-1})^2}\, d\mu(x) 
 =  a_n^2 \lambda_{i,n-1}^2 p_n^2(x_{i,n-1}) [p_{n-1}'(x_{i,n-1})]^2 = 1, \]
where we used \eqref{lambdaint} for $n-1$ and \eqref{lambda2}.
For the column sums we have
\[   \sum_{i=1}^{n-1} a_{i,j} = a_n^2 \lambda_{j,n} p_{n-1}^2(x_{j,n}) \sum_{i=1}^{n-1} \lambda_{i,n-1} \frac{p_n^2(x_{i,n-1})}{(x_{j,n}-x_{i,n-1})^2} . \]
By Gaussian quadrature (with the zeros of $p_{n-1}$ and remainder) we thus find
\[  \sum_{i=1}^{n-1} a_{i,j} = a_n^2 \lambda_{j,n} p_{n-1}^2(x_{j,n}) \left( \int_a^b \frac{p_n^2(x)}{(x-x_{j,n})^2}\, d\mu(x) 
- \frac{1}{a_n^2} \right), \]
because the remainder in \eqref{gauss} is
\[   \frac{1}{(2n-2)! \gamma_{n-1}^2} \frac{d^{2n-2}}{dx^{2n-2}} \left( \gamma_n x^{2n-2} + \cdots \right) 
    = \frac{\gamma_{n}^2}{\gamma_{n-1}^2} = \frac{1}{a_n^2}, \]
where we used \eqref{an}. Together with \eqref{lambdaint} and \eqref{lambda} we then find
\[   \sum_{i=1}^{n-1} a_{i,j} = 1 - \lambda_{j,n} p_{n-1}^2(x_{j,n}) = 1 - a_{n,j}, \]
proving that the column sums add to 1.
\end{proof}

\begin{corollary}
Suppose $\supp(\mu) \subset [a,b]$ and $f$ is a convex function on $[a,b]$, then
\[  \sum_{i=1}^{n-1} f(x_{i,n-1}) + f(b_{n-1})  \leq \sum_{j=1}^n f(x_{j,n}) .  \] 
\end{corollary}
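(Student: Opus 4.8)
\emph{Proof proposal.} The plan is to deduce the corollary directly from Theorem~\ref{thm:2.1} by Jensen's inequality, which is the standard passage from majorization to inequalities for convex functions. First I would check that all the points appearing in the statement lie in $[a,b]$, so that convexity of $f$ may legitimately be invoked at each of them: the zeros $x_{j,n}$ and $x_{i,n-1}$ lie in the convex hull of $\supp(\mu) \subset [a,b]$, and $b_{n-1} = \int_a^b x\, p_{n-1}^2(x)\, d\mu(x)$ is a weighted average of points of $\supp(\mu)$ with weights $p_{n-1}^2\, d\mu$ of total mass $1$, hence $b_{n-1} \in [a,b]$ as well.

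Next I would invoke \eqref{x=Ax}: writing $A = (a_{i,j})_{i,j=1}^n$ for the doubly stochastic matrix of Theorem~\ref{thm:2.1}, each component on the left is a convex combination of the zeros of $p_n$, namely $x_{i,n-1} = \sum_{j=1}^n a_{i,j} x_{j,n}$ for $1 \le i \le n-1$ and $b_{n-1} = \sum_{j=1}^n a_{n,j} x_{j,n}$, with $a_{i,j} \ge 0$ and $\sum_{j=1}^n a_{i,j} = 1$ for every $i$. By convexity of $f$ (Jensen's inequality), $f\bigl(\sum_{j=1}^n a_{i,j} x_{j,n}\bigr) \le \sum_{j=1}^n a_{i,j} f(x_{j,n})$ for each $i = 1,\ldots,n$.

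Finally I would sum these $n$ inequalities over $i$ and interchange the order of summation, using that the column sums of $A$ equal $1$:
\[
  \sum_{i=1}^{n-1} f(x_{i,n-1}) + f(b_{n-1})
  = \sum_{i=1}^n f\!\Bigl(\sum_{j=1}^n a_{i,j} x_{j,n}\Bigr)
  \le \sum_{j=1}^n \Bigl(\sum_{i=1}^n a_{i,j}\Bigr) f(x_{j,n})
  = \sum_{j=1}^n f(x_{j,n}),
\]
which is the asserted inequality. There is no genuine obstacle here, since the whole content sits in Theorem~\ref{thm:2.1}; the only point requiring a moment's care is confirming $b_{n-1} \in [a,b]$ so that $f$ is convex at that argument. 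Equivalently, one could simply cite the general fact that $x \prec y$ implies $\sum_i f(x_i) \le \sum_i f(y_i)$ for convex $f$, together with the majorization $(x_{1,n-1},\ldots,x_{n-1,n-1},b_{n-1}) \prec (x_{1,n},\ldots,x_{n,n})$ already recorded in the introduction.
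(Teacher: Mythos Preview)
Your argument is correct and follows essentially the same route as the paper: apply convexity to each row of the doubly stochastic relation \eqref{x=Ax} and then sum, using that the column sums equal $1$. The only addition you make is the (welcome) verification that $b_{n-1}\in[a,b]$, which the paper leaves implicit.
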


\begin{proof}
The relation \eqref{x=Ax} gives
\[    x_{i,n-1} = \sum_{j=1}^n a_{i,j} x_{j,n}, \]
and since the row sum adds to one and $a_{i,j} >0$, this means that the right hand side is a convex combination of the zeros $x_{j,n}$. For a convex function
one has
\[     f(\lambda x + (1-\lambda)y) \leq \lambda f(x) + (1-\lambda) f(y), \qquad 0 < \lambda < 1, \]
hence for this convex combination
\[    f(x_{i,n-1}) \leq \sum_{j=1}^n a_{i,j} f(x_{j,n}), \qquad 1 \leq i \leq n-1.   \]
The same reasoning also gives
\[   f(b_{n-1}) \leq \sum_{j=1}^n a_{n,j} f(x_{j,n}).  \]
Adding over $i$ and using that the column sums of $A$ add to 1, then gives the required result.
\end{proof}

\section{Zeros of associated polynomials}  \label{sec3}
 \begin{theorem}  \label{thm:3.1}
Let $(y_{1,n-1},\ldots,y_{n-1,n-1})$ be the zeros of $p_{n-1}^{(1)}$ and $(x_{1,n},\ldots,x_{n,n})$ be the zeros of $p_n$, 
and let $(a_{n+1},b_n)_{n \geq 0}$ be the recurrence coefficients in \eqref{3term} and \eqref{jacobi2}. Then  
\begin{equation}  \label{y=Bx}
  \begin{pmatrix}  y_{1,n-1} \\ y_{2,n-1} \\ \vdots \\ y_{n-1,n-1} \\ b_{0} \end{pmatrix}
                    = B \begin{pmatrix}  x_{1,n} \\ x_{2,n} \\ \vdots \\ x_{n-1,n} \\ x_{n,n}  \end{pmatrix} ,  
\end{equation}
where $B = (b_{i,j})_{i,j=1}^n$ is a doubly stochastic matrix with entries
\begin{equation}  \label{B}
   b_{i,j} = \begin{cases}
\displaystyle a_1^2 \frac{\lambda_{j,n}\lambda_{i,n-1}^{(1)}}{(y_{i,n-1}-x_{j,n})^2}, & 1 \leq i \leq n-1, \\[10pt]
              \lambda_{j,n} , & i = n, \end{cases}  
\end{equation}
where $\lambda_{i,n-1}^{(1)}$ are the Christoffel numbers of the polynomials $p_{n-1}^{(1)}$ for the measure $\mu^{(1)}$.
\end{theorem}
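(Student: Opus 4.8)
The plan is to follow the proof of Theorem~\ref{thm:2.1}, but with $\mu$ replaced by the associated measure $\mu^{(1)}$, whose orthonormal polynomials $p_m^{(1)}$ have recurrence coefficients $(a_{m+1},b_{m+1})$, and with the Gaussian quadrature step replaced by the partial fraction decomposition \eqref{parfrac}. The one ingredient beyond the material in the text is the Casorati (Wronskian) identity for the polynomials of the first and second kind,
\[
 a_n\bigl(p_{n-1}(z)\,p_{n-1}^{(1)}(z) - p_n(z)\,p_{n-2}^{(1)}(z)\bigr) = a_1 ,
\]
which follows from the three term recurrences of $(p_k)_k$ and $(p_{k-1}^{(1)})_k$ because the corresponding Casorati determinant is independent of $k$, its value $-a_1$ being read off at the initial indices (this identity is also recorded in \cite{WVA}). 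Evaluating it at a zero $y_{i,n-1}$ of $p_{n-1}^{(1)}$ gives $p_{n-2}^{(1)}(y_{i,n-1}) = -a_1/\bigl(a_n p_n(y_{i,n-1})\bigr)$, and substituting this into \eqref{lambda} written for $\mu^{(1)}$, i.e.\ $\lambda_{i,n-1}^{(1)} = 1/\bigl(a_n(p_{n-1}^{(1)})'(y_{i,n-1})p_{n-2}^{(1)}(y_{i,n-1})\bigr)$, yields the key identity
\[
 \lambda_{i,n-1}^{(1)} = \frac{-\,p_n(y_{i,n-1})}{a_1\,(p_{n-1}^{(1)})'(y_{i,n-1})} .
\]

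Next I would prove \eqref{y=Bx} together with the fact that the rows of $B$ sum to $1$. Since $p_{n-1}^{(1)}(y_{i,n-1})=0$, evaluating \eqref{parfrac} at $z=y_{i,n-1}$ gives $\sum_{j=1}^n \lambda_{j,n}/(y_{i,n-1}-x_{j,n})=0$, while differentiating \eqref{parfrac} and evaluating at $z=y_{i,n-1}$ gives, using the key identity,
\[
 \sum_{j=1}^n \frac{\lambda_{j,n}}{(y_{i,n-1}-x_{j,n})^2}
   = -\,\frac{(p_{n-1}^{(1)})'(y_{i,n-1})}{a_1\,p_n(y_{i,n-1})}
   = \frac{1}{a_1^2\,\lambda_{i,n-1}^{(1)}} .
\]
Hence for $1\le i\le n-1$ the entries $b_{i,j}$ in \eqref{B} are positive (Christoffel numbers are positive) and sum to $1$ over $j$; writing $x_{j,n}=y_{i,n-1}-(y_{i,n-1}-x_{j,n})$ in $\sum_j b_{i,j}x_{j,n}$ and using the two displayed identities gives $\sum_j b_{i,j}x_{j,n}=y_{i,n-1}$. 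The last row is immediate from $\sum_j \lambda_{j,n}=1$ and $\sum_j \lambda_{j,n}x_{j,n}=\int x\,d\mu=b_0$, both cases of Gaussian quadrature being exact on linear polynomials.

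The remaining claim, that the columns of $B$ sum to $1$, is the main work; it amounts to $a_1^2\sum_{i=1}^{n-1}\lambda_{i,n-1}^{(1)}/(x_{j,n}-y_{i,n-1})^2=1/\lambda_{j,n}-1$ for each $j$. I would first derive the partial fraction identity
\[
 a_1^2\sum_{i=1}^{n-1}\frac{\lambda_{i,n-1}^{(1)}}{z-y_{i,n-1}}
   = z-b_0-\frac{a_1\,p_n(z)}{p_{n-1}^{(1)}(z)} .
\]
Dividing $a_1 p_n(z)=(z-b_0)p_{n-1}^{(1)}(z)+r(z)$ and checking $\deg r\le n-2$ by matching the two leading coefficients --- this is where $\textrm{Tr } J_n=\sum_{k=0}^{n-1}b_k$ and $\textrm{Tr } J_n^{(1)}=\sum_{k=1}^{n-1}b_k$ enter --- shows that the right-hand side equals $-r(z)/p_{n-1}^{(1)}(z)$, a proper rational function whose partial fraction over the simple zeros $y_{i,n-1}$ has residues $-a_1 p_n(y_{i,n-1})/(p_{n-1}^{(1)})'(y_{i,n-1})=a_1^2\lambda_{i,n-1}^{(1)}$ by the key identity. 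Differentiating this identity, evaluating at $z=x_{j,n}$, and using $p_n(x_{j,n})=0$ together with the residue form $p_{n-1}^{(1)}(x_{j,n})/p_n'(x_{j,n})=a_1\lambda_{j,n}$ of \eqref{parfrac}, the right-hand side collapses to $1/\lambda_{j,n}-1$; therefore $\sum_{i=1}^n b_{i,j}=\lambda_{j,n}+\lambda_{j,n}\bigl(1/\lambda_{j,n}-1\bigr)=1$, so $B$ is doubly stochastic.

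I expect the main obstacle to be this last step --- proving the partial fraction identity with the correct residues (equivalently, pinning down $\lambda_{i,n-1}^{(1)}$ by means of the Casorati identity) and carrying out the differentiate-and-evaluate computation at $z=x_{j,n}$ cleanly. Everything else is a direct transcription of the proof of Theorem~\ref{thm:2.1}.
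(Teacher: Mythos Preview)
Your proposal is correct and follows essentially the same route as the paper: partial fraction expansion \eqref{parfrac} evaluated and differentiated at the zeros $y_{i,n-1}$, the Wronskian/Casorati identity to pin down $\lambda_{i,n-1}^{(1)}$, and Gaussian quadrature for the last row. The only notable difference is in the column-sum argument. The paper reaches the identity
\[
 a_1^2\sum_{i=1}^{n-1}\frac{\lambda_{i,n-1}^{(1)}}{z-y_{i,n-1}} = z-b_0-\frac{a_1\,p_n(z)}{p_{n-1}^{(1)}(z)}
\]
by introducing the second associated polynomial $p_{n-2}^{(2)}$ and quoting the identity $zp_{n-1}^{(1)} = a_1 p_n + b_0 p_{n-1}^{(1)} + (a_1^2/a_2)\,p_{n-2}^{(2)}$ together with the partial fractions of $p_{n-2}^{(2)}/p_{n-1}^{(1)}$ from \cite{WVA}; you instead derive it directly by polynomial division (matching the two leading coefficients via the trace identities) and computing the residues from your key formula for $\lambda_{i,n-1}^{(1)}$. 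Your argument is slightly more self-contained since it avoids $p_{n-2}^{(2)}$ altogether, while the paper's version makes the connection to the hierarchy of associated polynomials explicit; both land on the same differentiate-and-evaluate computation at $z=x_{j,n}$, and the paper's final simplification via the Wronskian at $x_{j,n}$ amounts to the same fact $p_{n-1}^{(1)}(x_{j,n})/p_n'(x_{j,n})=a_1\lambda_{j,n}$ that you invoke.
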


\begin{proof}
If we use the partial fractions decomposition \eqref{parfrac}, then
\[   \frac{p_{n-1}^{(1)}(z)}{p_n(z)} = a_1 \sum_{j=1}^n \frac{\lambda_{j,n}(z-x_{j,n})}{(z-x_{j,n})^2} .  \]
If we take $z=y_{i,n-1}$ then this expression vanishes, hence
\begin{equation}  \label{ykn-1}
    y_{i,n-1} \sum_{j=1}^n \frac{\lambda_{j,n}}{(y_{i,n-1}-x_{j,n})^2} = \sum_{j=1}^n x_{j,n} \frac{\lambda_{j,n}}{(y_{i,n-1}-x_{j,n})^2} . 
\end{equation}
Observe that
\[   a_1 \sum_{j=1}^n \frac{\lambda_{j,n}}{(z-x_{j,n})^2} = - \frac{d}{dz} \frac{p_{n-1}^{(1)}(z)}{p_n(z)}
   = - \frac{[p_{n-1}^{(1)}(z)]'p_n(z) - p_n'(z)p_{n-1}^{(1)}(z)}{p_n^2(z)} . \]
Taking $z=y_{i,n-1}$ then gives
\begin{equation}   \label{lambda/(x-y)2}
   \sum_{j=1}^n \frac{\lambda_{j,n}}{(y_{i,n-1}-x_{j,n})^2} = \frac{-1}{a_1} \frac{[p_{n-1}^{(1)}(y_{i,n-1})]'}{p_n(y_{i,n-1})}, 
\end{equation}  
so that \eqref{ykn-1} gives
\[    y_{i,n-1} = \sum_{j=1}^n b_{i,j} x_{j,n}, \]
with
\[   b_{i,j} = - a_1 \frac{\lambda_{j,n}}{(y_{i,n-1}-x_{j,n})^2} \frac{p_n(y_{i,n-1})}{[p_{n-1}^{(1)}(y_{i,n-1})]'}.  \]
The associated polynomials are again orthogonal polynomials, but now with respect to another measure $\mu^{(1)}$. The Christoffel
numbers $\lambda_{i,n-1}^{(1)}$ for this measure are obtained by shifting all the recurrence coefficients up by one, and hence
\eqref{lambda} gives
\begin{equation}  \label{lambda(1)}
   \lambda_{i,n-1}^{(1)} = \frac{1}{a_n[p_{n-1}^{(1)}(y_{i,n-1})]'p_{n-2}^{(1)}(y_{i,n-1})} 
                         = \frac{-1}{a_{n+1} [p_{n-1}^{(1)}(y_{i,n-1})]' p_n^{(1)}(y_{i,n-1})}, 
\end{equation}
which gives
\[   b_{i,j} = a_1 \frac{\lambda_{j,n} \lambda_{i,n-1}^{(1)} a_{n+1} p_n(y_{i,n-1})p_n^{(1)}(y_{i,n-1})}{(y_{i,n-1}-x_{j,n})^2}, \]
and if we use the Wronskian formula \cite[Eq. (2.3)]{WVA}
\begin{equation}   \label{Wronskian}
   a_{n+1} \Bigl( p_n(x)p_n^{(1)}(x) - p_{n+1}(x) p_{n-1}^{(1)}(x) \Bigr) = a_1, 
\end{equation}
then for $x=y_{i,n-1}$ this gives $a_{n+1} p_n(y_{i,n-1})p_n^{(1)}(y_{i,n-1}) = a_1$, from which the formula \eqref{B} follows
for $1 \leq i \leq n-1$. Note that from \eqref{bn} and Gaussian quadrature
\[   b_0  =  \int x \, d\mu(x) =  \sum_{j=1}^n \lambda_{j,n} x_{j,n}, \]
so that \eqref{B} is also true for $i=n$.

We will now show that the matrix $B = (b_{i,j})_{1\leq i,j \leq n}$ is doubly stochastic. Obviously $b_{i,j} >0$ because the
Christoffel numbers are all positive. For the row sums we have by \eqref{lambda/(x-y)2}
\[   \sum_{j=1}^n b_{i,j} = a_1^2 \lambda_{i,n-1}^{(1)} \sum_{j=1}^n \frac{\lambda_{j,n}}{(y_{i,n-1}-x_{j,n})^2}
    = -a_1 \lambda_{i,n-1}^{(1)}  \frac{[p_{n-1}^{(1)}(y_{i,n-1})]'}{p_n(y_{i,n-1})} = 1, \]
where the last equality follows from $a_{n+1} p_n(y_{i,n-1})p_n^{(1)}(y_{i,n-1}) = a_1$ and \eqref{lambda(1)}.
For the column sums we have
\[   \sum_{i=1}^{n-1} b_{i,j} = a_1^2 \lambda_{j,n} \sum_{i=1}^{n-1} \frac{\lambda_{i,n-1}^{(1)}}{(y_{i,n-1}-x_{j,n})^2} . \]
Now use the partial fractions decomposition
\[   \frac{p_{n-2}^{(2)}(z)}{p_{n-1}^{(1)}(z)} = a_2 \sum_{i=1}^{n-1} \frac{\lambda_{i,n-1}^{(1)}}{z-y_{i,n-1}}. \]
From the identity \cite[Eq. (2.5)]{WVA} we have
\[     zp_{n-1}^{(1)}(z) = a_1 p_n(z) + b_0 p_{n-1}^{(1)}(z) + \frac{a_1^2}{a_2} p_{n-2}^{(2)}(z), \]
so that
\[   \frac{a_1^2}{a_2} \frac{p_{n-2}^{(2)}(z)}{p_{n-1}^{(1)}(z)} = z-b_0 - a_1 \frac{p_n(z)}{p_{n-1}^{(1)}(z)} , \] and hence 
\[   a_1^2 \sum_{i=1}^{n-1} \frac{\lambda_{i,n-1}^{(1)}}{z-y_{i,n-1}} = z-b_0 - a_1 \frac{p_n(z)}{p_{n-1}^{(1)}(z)} . \]
Taking the derivative then gives
\[  - a_1^2 \sum_{i=1}^{n-1} \frac{\lambda_{i,n-1}^{(1)}}{(z-y_{i,n-1})^2} 
   = 1 - a_1 \frac{p_n'(z)p_{n-1}^{(1)}(z)-p_n(z)[p_{n-1}^{(1)}(z)]'}{[p_{n-1}^{(1)}(z)]^2} , \]
which after evaluation at $z=x_{j,n}$ gives
\[   -a_1^2 \sum_{i=1}^{n-1} \frac{\lambda_{i,n-1}^{(1)}}{(x_{j,n}-y_{i,n-1})^2} 
   = 1 - a_1 \frac{p_n'(x_{j,n})}{p_{n-1}^{(1)}(x_{j,n})} . \]
Therefore
\[  \sum_{i=1}^{n-1}  b_{i,j} = \lambda_{j,n} \left( a_1 \frac{p_n'(x_{j,n})}{p_{n-1}^{(1)}(x_{j,n})} - 1 \right). \]
The Wronskian formula \eqref{Wronskian} evaluated at $x_{j,n}$ gives $-a_{n+1}p_{n+1}(x_{j,n})p_{n-1}^{(1)}(x_{j,n}) = a_1$, so that
\[  \sum_{i=1}^{n-1}  b_{i,j} = \lambda_{j,n} \bigl( -a_{n+1}p_{n+1}(x_{j,n})p_n'(x_{j,n}) - 1 \bigr) = 1 - \lambda_{j,n}, \]
where we used \eqref{lambda} to get the last equality. This shows that the column sums also add to one.
\end{proof}

\begin{corollary}
Suppose $\supp(\mu) \subset [a,b]$ and  $f$ is a convex function on $[a,b]$, then
\[  \sum_{i=1}^{n-1} f(y_{i,n-1}) + f(b_0)  \leq \sum_{j=1}^n f(x_{j,n}) .  \] 
\end{corollary}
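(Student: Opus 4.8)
The plan is to mimic the proof of the corollary to Theorem~\ref{thm:2.1}, now using the doubly stochastic matrix $B$ furnished by Theorem~\ref{thm:3.1}. The only inputs needed are that $B = (b_{i,j})_{i,j=1}^n$ has nonnegative entries, row sums equal to $1$, and column sums equal to $1$, together with the identity \eqref{y=Bx}.

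First I would read off from \eqref{y=Bx} that for each $1 \leq i \leq n-1$,
\[ y_{i,n-1} = \sum_{j=1}^n b_{i,j}\, x_{j,n}, \]
and that $b_0 = \sum_{j=1}^n b_{n,j}\, x_{j,n}$. Since $b_{i,j} \geq 0$ and $\sum_{j=1}^n b_{i,j} = 1$ for every $i$, each of these is a convex combination of the numbers $x_{1,n},\ldots,x_{n,n}$, all of which lie in $[a,b] \supset \supp(\mu)$. In particular $y_{i,n-1} \in [a,b]$ for $1 \leq i \leq n-1$ and $b_0 \in [a,b]$, so $f$ is defined and convex at every point occurring on the left-hand side of the claimed inequality.

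Next I would apply the convexity inequality, iterated over the entries of each row, to these convex combinations:
\[ f(y_{i,n-1}) \leq \sum_{j=1}^n b_{i,j}\, f(x_{j,n}), \quad 1 \leq i \leq n-1, \qquad f(b_0) \leq \sum_{j=1}^n b_{n,j}\, f(x_{j,n}). \]
Summing these $n$ inequalities and interchanging the order of summation gives
\[ \sum_{i=1}^{n-1} f(y_{i,n-1}) + f(b_0) \leq \sum_{j=1}^n \left( \sum_{i=1}^n b_{i,j} \right) f(x_{j,n}) = \sum_{j=1}^n f(x_{j,n}), \]
where the last equality uses that the column sums of $B$ equal $1$. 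This is exactly the asserted inequality.

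There is essentially no obstacle here: all the real work was already done in establishing that $B$ is doubly stochastic in Theorem~\ref{thm:3.1}. The only point deserving a moment's care is the domain issue---ensuring that $b_0$ and the $y_{i,n-1}$ lie in $[a,b]$---but this is immediate from the convex-combination representation above (alternatively, from the interlacing $x_{1,n} < y_{i,n-1} < x_{n,n}$ together with $b_0 = \int x\, d\mu \in [a,b]$). One could equally well deduce the corollary from the general fact that $x \prec y$ implies $\sum_i f(x_i) \leq \sum_i f(y_i)$ for convex $f$, applied to the majorization $(y_{1,n-1},\ldots,y_{n-1,n-1},b_0) \prec (x_{1,n},\ldots,x_{n,n})$ recorded just after \eqref{sumyb}, but the matrix route through Theorem~\ref{thm:3.1} is the most direct.
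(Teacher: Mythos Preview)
Your proof is correct and follows exactly the same approach as the paper: the paper proves the analogous corollary to Theorem~\ref{thm:2.1} by precisely this argument (convexity applied row-wise, then summing and using that column sums equal~$1$), and leaves the present corollary without a separate proof since the argument transfers verbatim with $A$ replaced by $B$. Your added remark about $b_0$ and the $y_{i,n-1}$ lying in $[a,b]$ is a nice bit of extra care, but otherwise there is nothing to distinguish your argument from the intended one.
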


\section{Deleting a row and column in the Jacobi matrix} \label{sec4}
The result in Theorem \ref{thm:2.1} corresponds to the eigenvalues of the Jacobi matrix $J_n$ after deleting the last row and column, which gives the Jacobi matrix $J_{n-1}$. In a similar way, the result in Theorem \ref{thm:3.1} corresponds to deleting the first row and column of the Jacobi matrix, giving the Jacobi matrix $J_n^{(1)}$. A natural question now is to ask what would happen when one deletes the $k$th row and column of the Jacobi matrix $J_n$, with $1 \leq k \leq n$. The resulting Jacobi matrix is
\[    \begin{pmatrix} b_0 & a_1 & 0   & 0 & 0 & 0 & 0 & 0 & \cdots & 0 \\
                      a_1 & b_1 & a_1 & 0 & 0 & 0 & 0 & 0 & \cdots & 0\\
                      0 & a_2 & \ddots & \ddots &  &  &  &  & \vdots & \vdots \\
                      \vdots  &  & \ddots & b_{k-3} & a_{k-2} & 0 & 0 & 0 &\cdots &  0 \\
                      0  & \cdots  & 0 & a_{k-2} & b_{k-2} & 0 & 0 & 0 & \cdots  & 0 \\
                      0  & \cdots  & 0 & 0 &  0 & b_{k} & a_{k+1} & 0 & \cdots  &  0 \\
                      0   & \cdots  & 0 & 0 & 0  &  a_{k+1} & b_{k+1} & a_{k+2} &  & \vdots \\
                      0  & \cdots  & 0 & 0 & 0  &      0    & a_{k+2} & \ddots & \ddots &  0\\
                      \vdots  &  \vdots  &  &  &   &          &         & \ddots & b_{n-2} & a_{n-1} \\
                      0  & \cdots  & 0 & 0 & 0  &   0       &    0     &   0    & a_{n-1} & b_{n-1}
       \end{pmatrix}   . \]
Observe that this matrix consists of the Jacobi matrix $J_{k-1}$ and the associated Jacobi matrix $J_{n-k}^{(k)}$ obtained by shifting the coefficients by $k$, hence the characteristic polynomial of this matrix is $c p_{k-1}(z)p_{n-k}^{(k)}(z)$, where $c$ is a constant that makes this polynomial
monic, i.e., $c = \prod_{j=1,j\neq k}^{n} a_j$. From \cite[Eq. (3.5)]{WVA} we have the partial fractions decomposition
\begin{equation}    \label{parfrack}
   \frac{p_{k-1}(z)p_{n-k}^{(k)}(z)}{p_n(z)} = a_k  \sum_{j=1}^n \frac{\lambda_{j,n}p_{k-1}^2(x_{j,n})}{z-x_{j,n}}.  
\end{equation}
We can now formulate and prove the following extension of Theorems \ref{thm:2.1} and \ref{thm:3.1}.

\begin{theorem}  \label{thm:4.1}
Let $(x_{1,k-1},\ldots,x_{k-1,k-1})$ be the zeros of $p_{k-1}$, and $(y_{k,n-k},\ldots,y_{n-1,n-k})$ be the zeros of $p_{n-k}^{(k)}$.
Then
\begin{equation}   \label{xy=Cx}
    \begin{pmatrix} x_{1,k-1} \\ \vdots \\ x_{k-1,k-1} \\ y_{k,n-k} \\ \vdots \\  y_{n-1,n-k} \\ b_{k-1} \end{pmatrix}
     = C   \begin{pmatrix}  x_{1,n} \\ x_{2,n} \\ x_{3,n} \\ \vdots \\ x_{n-2,n} \\ x_{n-1,n} \\ x_{n,n}  \end{pmatrix},  
\end{equation}
where $C = (c_{i,j})_{i,j=1}^n$ is a doubly stochastic matrix with entries
\begin{equation}   \label{C}
     c_{i,j} = \begin{cases}
      \displaystyle  a_k^2 \frac{\lambda_{j,n} \lambda_{i,k-1} p_{k-1}^2(x_{j,n})p_k^2(x_{i,k-1})}{(x_{i,k-1}-x_{j,n})^2}, & 1 \leq i \leq k-1, \\[10pt]
      \displaystyle  a_k^2 \frac{\lambda_{j,n} \lambda_{i,n-k}^{(k)} p_{k-1}^2(x_{j,n})}{(y_{i,n-k}-x_{j,n})^2}, & k \leq i \leq n-1, \\[10pt]
                  \lambda_{j,n} p_{k-1}^2(x_{j,n}), & i = n,  \end{cases}
\end{equation}
where $(\lambda_{1,n},\ldots,\lambda_{n,n})$ are the Christoffel numbers of the orthogonal polynomials $p_n$ for the measure $\mu$ 
and $(\lambda_{k,n-k}^{(k)},\ldots,\lambda_{n-1,n-k}^{(k)})$ are the Christoffel numbers of the polynomials $p_{n-k}^{(k)}$ for the measure $\mu^{(k)}$. 
\end{theorem}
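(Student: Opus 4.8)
The plan is to route everything through the single rational function
\[
   r(z)\;=\;\sum_{j=1}^n \frac{\rho_j}{z-x_{j,n}},\qquad \rho_j:=\lambda_{j,n}\,p_{k-1}^2(x_{j,n}),
\]
which, by the partial fractions decomposition \eqref{parfrack}, is nothing but $p_{k-1}(z)p_{n-k}^{(k)}(z)\big/\bigl(a_kp_n(z)\bigr)$; thus $q(z):=p_{k-1}(z)p_{n-k}^{(k)}(z)$ is, up to a monic normalising factor, the characteristic polynomial of the deleted Jacobi matrix, and the zeros of $q$ are precisely the zeros of $p_{k-1}$ together with those of $p_{n-k}^{(k)}$. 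I would first record two facts used repeatedly: since $p_n$ and $p_{k-1}$ have no common zero, every $\rho_j>0$, so the right-hand side of \eqref{parfrack} has a genuine simple pole at each $x_{j,n}$; and Gaussian quadrature \eqref{gauss} applied to $p_{k-1}^2$ gives $\sum_j\rho_j=1$, so $r(z)\sim 1/z$ at infinity. Moreover $r'(z)=-\sum_j\rho_j/(z-x_{j,n})^2<0$, hence $r$ decreases strictly from $+\infty$ to $-\infty$ on each interval $(x_{j,n},x_{j+1,n})$ and keeps a constant sign outside $[x_{1,n},x_{n,n}]$; so the $n-1$ zeros of $q$ are simple, lie one in each gap $(x_{j,n},x_{j+1,n})$, and differ from every $x_{j,n}$. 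In particular $p_{k-1}$ and $p_{n-k}^{(k)}$ share no zero, and all the quantities on the left of \eqref{xy=Cx} are well defined.

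For \eqref{xy=Cx} itself, at each zero $w$ of $q$ we have $r(w)=0$, that is $\sum_j\rho_j(w-x_{j,n})\big/(w-x_{j,n})^2=0$, which rearranges to
\[
   w\;=\;\frac{\displaystyle\sum_{j=1}^n\rho_j\,x_{j,n}\big/(w-x_{j,n})^2}{\displaystyle\sum_{j=1}^n\rho_j\big/(w-x_{j,n})^2}\,,
\]
so $w$ is a convex combination of the $x_{j,n}$ with weights proportional to $\rho_j/(w-x_{j,n})^2$. Applying this with $w=x_{i,k-1}$ for $1\le i\le k-1$ and with $w=y_{i,n-k}$ for $k\le i\le n-1$, and using $b_{k-1}=\int x\,p_{k-1}^2(x)\,d\mu(x)=\sum_j\rho_jx_{j,n}$ (by \eqref{bn} and \eqref{gauss}) for the last row, I would obtain \eqref{xy=Cx} with a matrix $C$ having positive entries and rows that already sum to $1$. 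What remains is to identify these entries with \eqref{C} and to check the column sums.

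Let $w_i$ denote the zero of $q$ attached to the $i$th row ($w_i=x_{i,k-1}$ or $w_i=y_{i,n-k}$) and set $K_i:=\bigl(\sum_j\rho_j/(w_i-x_{j,n})^2\bigr)^{-1}=-1/r'(w_i)>0$, so that $c_{i,j}=K_i\rho_j/(w_i-x_{j,n})^2$ for $1\le i\le n-1$ and $c_{n,j}=\rho_j$. The column sums then come essentially for free, using nothing about $K_i$ beyond this normalisation: since $r(z)\sim 1/z$ at infinity, its reciprocal has the partial fraction expansion $1/r(z)=z+c-\sum_{i=1}^{n-1}K_i/(z-w_i)$ for some constant $c$, hence $(1/r)'(z)=1+\sum_iK_i/(z-w_i)^2$; evaluating this at $z=x_{j,n}$, where $r(z)-\rho_j/(z-x_{j,n})$ stays bounded and therefore $1/r$ vanishes with derivative $1/\rho_j$, gives $1/\rho_j=1+\sum_iK_i/(x_{j,n}-w_i)^2$, so $\sum_{i=1}^nc_{i,j}=\rho_j\bigl(1+\sum_iK_i/(w_i-x_{j,n})^2\bigr)=1$. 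Thus $C$ is doubly stochastic as soon as \eqref{xy=Cx} is known.

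The substantive step is to show that $K_i$ has the explicit form in \eqref{C}. For $w_i=x_{i,k-1}$ this is the computation in the proof of Theorem~\ref{thm:2.1} with $n-1$ replaced by $k-1$: by \eqref{gauss} (exact, since $p_{k-1}(x)/(x-x_{i,k-1})$ is a polynomial of degree $k-2$) one has $\sum_j\rho_j/(x_{i,k-1}-x_{j,n})^2=\int_a^bp_{k-1}^2(x)/(x-x_{i,k-1})^2\,d\mu(x)$, which equals $\lambda_{i,k-1}[p_{k-1}'(x_{i,k-1})]^2$ by \eqref{lambdaint} and then $1\big/\bigl(a_k^2\lambda_{i,k-1}p_k^2(x_{i,k-1})\bigr)$ after inserting \eqref{lambda} and the relation $a_{k-1}p_{k-2}(x_{i,k-1})=-a_kp_k(x_{i,k-1})$ from \eqref{3term}; this is the first line of \eqref{C}. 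For $w_i=y_{i,n-k}$ I would differentiate \eqref{parfrack} and evaluate at $z=y_{i,n-k}$, where $q$ vanishes, to get $\sum_j\rho_j/(y_{i,n-k}-x_{j,n})^2=-\tfrac{1}{a_k}\,p_{k-1}(y_{i,n-k})\,[p_{n-k}^{(k)}]'(y_{i,n-k})\big/p_n(y_{i,n-k})$, and express the shifted Christoffel number, as in \eqref{lambda(1)} (the recurrence coefficients of $p^{(k)}$ being the $(a_{m+k},b_{m+k})$, so that $a_n$ plays the role of $a_{n-k}$ in \eqref{lambda}), in the two forms
\begin{align*}
   \lambda_{i,n-k}^{(k)}
   &=\frac{1}{a_n\,[p_{n-k}^{(k)}]'(y_{i,n-k})\,p_{n-k-1}^{(k)}(y_{i,n-k})}\\
   &=\frac{-1}{a_{n+1}\,[p_{n-k}^{(k)}]'(y_{i,n-k})\,p_{n-k+1}^{(k)}(y_{i,n-k})},
\end{align*}
the second following from the recurrence for $p^{(k)}$ at $y_{i,n-k}$. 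Then $K_i=a_k^2\lambda_{i,n-k}^{(k)}$ reduces to $a_{n+1}\,p_n(y_{i,n-k})\,p_{n-k+1}^{(k)}(y_{i,n-k})=a_k\,p_{k-1}(y_{i,n-k})$, which is the value at $z=y_{i,n-k}$ of the Wronskian-type identity
\[
   a_{n+1}\bigl(p_n(z)\,p_{n-k+1}^{(k)}(z)-p_{n+1}(z)\,p_{n-k}^{(k)}(z)\bigr)=a_k\,p_{k-1}(z),
\]
which interpolates between the Wronskian \eqref{Wronskian} ($k=1$) and the three-term recurrence \eqref{3term} ($k=n$, where $p_{n-k}^{(k)}\equiv 1$). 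I would prove this by induction on $k$ from the recurrences for $p_n$ and $p_m^{(k)}$, or quote it from \cite{WVA}. This identity, together with keeping the index shifts in the associated polynomials and their Christoffel numbers straight, is the step I expect to be the main obstacle; everything else is a direct adaptation of the proofs of Theorems~\ref{thm:2.1} and \ref{thm:3.1}, or is automatic as above.
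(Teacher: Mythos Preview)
Your proof is correct and, for the most part, runs parallel to the paper's: both derive the convex-combination rows from $r(w)=0$, both get the $b_{k-1}$ row from Gaussian quadrature, and both identify $K_i$ for $k\le i\le n-1$ via differentiation of \eqref{parfrack} together with the identity $a_{n+1}\bigl(p_np_{n-k+1}^{(k)}-p_{n+1}p_{n-k}^{(k)}\bigr)=a_kp_{k-1}$ (which is \cite[Eq.~(2.6)]{WVA}, quoted in the paper as \eqref{W26}). Where you differ is in two places, and in both you come out ahead. For $1\le i\le k-1$ you compute $\sum_j\rho_j/(x_{i,k-1}-x_{j,n})^2$ directly by Gaussian quadrature and \eqref{lambdaint}, exactly as in Theorem~\ref{thm:2.1}; the paper instead differentiates \eqref{parfrack}, which leaves a factor $p_{n-k}^{(k)}(x_{i,k-1})$ that then has to be eliminated via the identity \eqref{W24} and a second Wronskian. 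More strikingly, your column-sum argument via the partial-fraction expansion $1/r(z)=z+c-\sum_iK_i/(z-w_i)$ and the evaluation $(1/r)'(x_{j,n})=1/\rho_j$ is genuinely different and much shorter: it uses only that the $\rho_j$ are positive and sum to $1$, and bypasses the long direct computation the paper carries out with \eqref{W24}, the confluent Christoffel--Darboux formula \eqref{CD}, and several auxiliary identities to reach \eqref{sum1}--\eqref{sum2}. The price is that your argument does not exhibit those two partial sums separately, but since only their total is needed for double stochasticity, nothing is lost. The Wronskian-type identity you flag as the likely obstacle is indeed the only nontrivial external input, and it is precisely what the paper imports from \cite{WVA}.
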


Observe that for $k=1$ we retrieve Theorem \ref{thm:3.1} and $k=n$ corresponds to Theorem \ref{thm:2.1}.

\begin{proof}
From \eqref{parfrack} we easily find
\begin{equation}  \label{pfk}
    \frac{p_{k-1}(z)p_{n-k}^{(k)}(z)}{p_n(z)} = a_k  \sum_{j=1}^n \frac{\lambda_{j,n}p_{k-1}^2(x_{j,n})(z-x_{j,n})}{(z-x_{j,n})^2}, 
\end{equation}
and by putting $z=x_{i,k-1}$ we obtain
\[     x_{i,k-1} \sum_{j=1}^n \frac{\lambda_{j,n}p_{k-1}^2(x_{j,n})}{(x_{i,k-1}-x_{j,n})^2} 
  = \sum_{j=1}^n x_{j,n} \frac{\lambda_{j,n}p_{k-1}^2(x_{j,n})}{(x_{i,k-1}-x_{j,n})^2} ,  \]
which gives $x_{i,k-1} = \sum_{j=1}^n c_{i,j} x_{j,n}$, with
\[    c_{i,j} = \frac{\lambda_{j,n} p_{k-1}^2(x_{j,n})/(x_{i,k-1}-x_{j,n})^2}{\sum_{\ell=1}^n \frac{\lambda_{\ell,n}p_{k-1}^2(x_{\ell,n})}{(
x_{i,k-1}-x_{\ell,n})^2}}.  \]
From this it is already clear that $c_{i,j} \geq 0$ and $\sum_{j=1}^n c_{i,j} = 1$, and this holds for $1 \leq i \leq k-1$.
In a similar way we find, by taking $z=y_{i,n-k}$ in \eqref{pfk}, 
\[    y_{i,n-k} \sum_{j=1}^n \frac{\lambda_{j,n}p_{k-1}^2(x_{j,n})}{(y_{i,n-k}-x_{j,n})^2} 
  = \sum_{j=1}^n x_{j,n} \frac{\lambda_{j,n}p_{k-1}^2(x_{j,n})}{(y_{i,n-k}-x_{j,n})^2} ,  \]
so that $y_{i,n-k} = \sum_{j=1}^n c_{i,j} x_{j,n}$, with
\[    c_{i,j} = \frac{\lambda_{j,n} p_{k-1}^2(x_{j,n})/(y_{i,n-k}-x_{j,n})^2}{\sum_{\ell=1}^n \frac{\lambda_{\ell,n}p_{k-1}^2(x_{\ell,n})}{(
y_{i,n-k}-x_{\ell,n})^2}},  \]
for $k \leq i \leq n-1$. Again it follows immediately that $c_{i,j} \geq 0$ and $\sum_{j=1}^n c_{i,j} = 1$ for $k \leq i \leq n-1$.
For $i=n$ we use \eqref{bn} and Gaussian quadrature to find
\[     b_{k-1} = \int x p_{k-1}^2(x)\, d\mu(x) = \sum_{j=1}^n \lambda_{j,n} x_{j,n} p_{k-1}^2(x_{j,n}),  \]
so that $c_{n,j} = \lambda_{j,n} p_{k-1}^2(x_{j,n})$. Clearly $c_{n,j} \geq 0$ and $\sum_{j=1}^n c_{n,j} = 1$ follows using Gaussian quadrature.

It remains to show that the $c_{i,j}$ can be simplified to \eqref{C} and that the sums in every column add to 1.
Taking the derivative in \eqref{parfrack} gives
\[   \frac{d}{dz} \frac{p_{k-1}(z)p_{n-k}^{(k)}(z)}{p_n(z)} = - a_k \sum_{\ell=1}^n \frac{\lambda_{\ell,n} p_{k-1}^2(x_{\ell,n})}{(z-x_{\ell,n})^2}, \]
and if we put $z=x_{i,k-1}$, for which $p_{k-1}(x_{i,k-1})=0$,  then this gives
\[    \frac{p_{k-1}'(x_{i,k-1})p_{n-k}^{(k)}(x_{i,k-1})}{p_n(x_{i,k-1})} = -a_k \sum_{\ell=1}^n \frac{\lambda_{\ell,n}p_{k-1}^2(x_{\ell,n})}
         {(x_{i,k-1}-x_{\ell,n})^2} . \]
In a similar way we put $z=y_{i,n-k}$, for which $p_{n-k}^{(k)}(y_{i,n-k})=0$, to find
\[   \frac{p_{k-1}(y_{i,n-k})[p_{n-k}^{(k)}(y_{i,n-k})]'}{p_n(y_{i,n-k})} = -a_k \sum_{\ell=1}^n \frac{\lambda_{\ell,n}p_{k-1}^2(x_{\ell,n})}
         {(y_{i,n-k}-x_{\ell,n})^2} . \]
We thus find
\[  c_{i,j} = \begin{cases}
        \displaystyle  -a_k \frac{\lambda_{j,n} p_{k-1}^2(x_{j,n}) p_n(x_{i,k-1})}{(x_{i,k-1}-x_{j,n})^2 p_{k-1}'(x_{i,k-1})p_{n-k}^{(k)}(x_{i,k-1})},
         &  1 \leq i \leq k-1, \\[20pt]
        \displaystyle  -a_k \frac{\lambda_{j,n} p_{k-1}^2(x_{j,n}) p_n(y_{i,n-k})}{(y_{i,n-k}-x_{j,n})^2 p_{k-1}(y_{i,n-k})[p_{n-k}^{(k)}(y_{i,n-k})]'},
         &  k \leq i \leq n-1.
             \end{cases} \]
By using \eqref{lambda2} one has
\[   \lambda_{i,k-1} = \frac{-1}{a_kp_{k-1}'(x_{i,k-1})p_k(x_{i,k-1})}. \]
From \cite[Eq. (2.4)]{WVA} it follows that
\begin{equation}   \label{W24}
     a_1 p_{n-k}^{(k)}(x) = a_k \Bigl( p_{k-1}(x)p_{n-1}^{(1)}(x) - p_n(x)p_{k-2}^{(1)}(x) \Bigr), 
\end{equation}
and for $x=x_{i,k-1}$ this gives
\[   a_1 p_{n-k}^{(k)}(x_{i,k-1}) = -a_k p_{k-1}^{(1)}(x_{i,k-1}) p_n(x_{i,k-1}). \]
Then, from the Wronskian formula \cite[Eq. (2.3)]{WVA} one has
\[   a_k \Bigl( p_{k-1}(x)p_{k-1}^{(1)}(x) - p_k(x)p_{k-2}^{(1)}(x) \Bigr) = a_1, \]
and for $x=x_{i,k-1}$ this gives
\[    -a_k p_k(x_{i,k-1})p_{k-2}^{(1)}(x_{i,k-1}) = a_1. \]
Combining all this gives
the required expression for $c_{i,j}$ in \eqref{C} for $1 \leq i \leq k-1$.
In a similar way it follows from \eqref{lambda2} that
\[  \lambda_{i,n-k}^{(k)} = \frac{-1}{a_{n+1} [p_{n-k}^{(k)}(y_{i,n-k})]' p_{n-k-1}^{(k)}(y_{i,n-k})} . \]
Then from \cite[Eq. (2.6)]{WVA} we find
\begin{equation}  \label{W26}
   a_{n+1} \Bigl( p_n(x)p_{n-k+1}^{(k)}(x) - p_{n+1}(x) p_{n-k}^{(k)}(x) \Bigr) = a_k p_{k-1}(x), 
\end{equation}
which for $x=y_{i,n-k}$ gives
\[    a_{n+1} p_n(y_{i,n-k}) p_{n-k+1}^{(k)}(y_{i,n-k}) = a_k p_{k-1}(y_{i,n-k}). \]
Combining all this gives the expression  for $c_{i,j}$ in \eqref{C} for $k \leq i \leq n-1$.

Now we show that the column sums are equal to $1$. We have
\begin{equation}  \label{sumcij}
 \sum_{i=1}^n c_{i,j} = \lambda_{j,n} p_{k-1}^2(x_{j,n}) \left( a_k^2 \sum_{i=1}^{k-1} \frac{\lambda_{i,k-1} p_k^2(x_{i,k-1})}{(x_{i,k-1}-x_{j,n})^2}
      + a_k^2 \sum_{i=k}^{n-1} \frac{\lambda_{i,n-k}^{(k)}}{(y_{i,n-k}-x_{j,n})^2} + 1 \right) .
\end{equation}
From the three term recurrence relation \eqref{3term} we find that $a_kp_k(x_{i,k-1})=a_{k-1} p_{k-2}(x_{i,k-1})$ so that
\[   a_k^2 \sum_{i=1}^{k-1} \frac{\lambda_{i,k-1} p_k^2(x_{i,k-1})}{(x_{i,k-1}-x_{j,n})^2}
    =   a_{k-1}^2 \sum_{i=1}^{k-1} \frac{\lambda_{i,k-1} p_{k-2}^2(x_{i,k-1})}{(x_{i,k-1}-x_{j,n})^2}. \]
From \eqref{parfrack} we find (for $k \to n$ and then $n \to k-1$)
\[   \frac{p_{k-2}(z)}{p_{k-1}(z)} = a_{k-1}  \sum_{i=1}^{k-1} \frac{\lambda_{i,k-1} p_{k-2}^2(x_{i,k-1})}{z-x_{i,k-1}}, \]
which after taking the derivative and for $z=x_{j,n}$ gives
\[    \frac{p_{k-2}'(x_{j,n})p_{k-1}(x_{j,n})-p_{k-2}(x_{j,n})p_{k-1}'(x_{j,n})}{p_{k-1}^2(x_{j,n})}
     = - a_{k-1} \sum_{i=1}^{k-1} \frac{\lambda_{i,k-1} p_{k-2}^2(x_{i,k-1})}{(x_{j,n}-x_{i,k-1})^2} .  \]
Hence 
\[  a_k^2 \sum_{i=1}^{k-1} \frac{\lambda_{i,k-1} p_k^2(x_{i,k-1})}{(x_{i,k-1}-x_{j,n})^2}
    = - a_{k-1}      \frac{p_{k-2}'(x_{j,n})p_{k-1}(x_{j,n})-p_{k-2}(x_{j,n})p_{k-1}'(x_{j,n})}{p_{k-1}^2(x_{j,n})} .  \] 
Recall the confluent form of the Christoffel-Darboux formula \cite[Eq. (3.2.4)]{Szego} \cite[Eq. (1.3.22)]{Gautschi}
\begin{equation}  \label{CD}
   \sum_{j=0}^n p_j^2(x) = a_{n+1} \Bigl( p_{n+1}'(x)p_n(x) - p_{n+1}(x)p_n'(x) \Bigr), 
\end{equation}
then this simplifies to
\begin{equation}  \label{sum1}
a_k^2 \sum_{i=1}^{k-1} \frac{\lambda_{i,k-1} p_k^2(x_{i,k-1})}{(x_{i,k-1}-x_{j,n})^2}
   = \frac{1}{p_{k-1}^2(x_{j,n})} \sum_{i=0}^{k-2} p_i^2(x_{j,n}).  
\end{equation}
Next, from \eqref{parfrac} we find (for the associated polynomials)
\[  \frac{p_{n-k-1}^{(k+1)}(z)}{p_{n-k}^{(k)}(z)} = a_{k+1} \sum_{i=k}^{n-1} \frac{\lambda_{i,n-k}^{(k)}}{z-y_{i,n-k}}, \]
which after taking the derivative and for $z=x_{j,n}$ gives
\[  \frac{[p_{n-k-1}^{(k+1)}(x_{j,n})]'p_{n-k}^{(k)}(x_{j,n})-[p_{n-k}^{(k)}(x_{j,n})]'p_{n-k-1}^{(k+1)}(x_{j,n})}{[p_{n-k}^{(k)}(x_{j,n})]^2}
    = - a_{k+1} \sum_{i=k}^{n-1} \frac{\lambda_{i,n-k}^{(k)}}{(x_{j,n}-y_{i,n-k})^2} . \] 
Now use \eqref{W24} to find
\begin{eqnarray*}
   p_{n-k}^{(k)}(x_{j,n}) &=& \frac{a_k}{a_1} p_{k-1}(x_{j,n}) p_{n-1}^{(1)}(x_{j,n}), \\
   p_{n-k-1}^{(k+1)}(x_{j,n}) &=& \frac{a_{k+1}}{a_1} p_{k}(x_{j,n}) p_{n-1}^{(1)}(x_{j,n}), \\
  {} [p_{n-k}^{(k)}(x_{j,n})]' &=& \frac{a_k}{a_1} \Bigl( p_{k-1}'(x_{j,n}) p_{n-1}^{(1)}(x_{j,n}) + p_{k-1}(x_{j,n})[p_{n-1}^{(1)}(x_{j,n})]' 
     - p_n'(x_{j,n})p_{k-2}^{(1)}(x_{j,n}) \Bigr), \\
  {} [p_{n-k-1}^{(k+1)}(x_{j,n})]' &=& \frac{a_{k+1}}{a_1} \Bigl( p_{k}'(x_{j,n}) p_{n-1}^{(1)}(x_{j,n}) + p_{k}(x_{j,n})[p_{n-1}^{(1)}(x_{j,n})]' 
     - p_n'(x_{j,n})p_{k-1}^{(1)}(x_{j,n}) \Bigr), 
\end{eqnarray*}
then, after some calculus, we find
\begin{multline*}
 a_k \sum_{i=k}^{n-1} \frac{\lambda_{i,n-k}^{(k)}}{(x_{j,n}-y_{i,n-k})^2} \\ =  
  \frac{1}{p_{k-1}^2(x_{j,n})} \left( \frac{p_n'(x_{j,n})}{p_{n-1}^{(1)}(x_{j,n})} [p_{k-1}^{(1)}(x_{j,n})p_{k-1}(x_{j,n}) -
   p_{k-1}^{(1)}(x_{j,n})p_k(x_{j,n})] \right. \\
    \left. \phantom{\frac{p_n'(x_{j,n})}{p_{n-1}^{(1)}(x_{j,n})}} - [p_k'(x_{j,n})p_{k-1}(x_{j,n}) - p_{k-1}'(x_{j,n})p_k(x_{j,n})]  \right). 
\end{multline*}
Using the Wronskian \eqref{Wronskian} and the Christoffel-Darboux formula \eqref{CD} gives
\begin{equation}  \label{sum2}
  a_k^2 \sum_{i=k}^{n-1} \frac{\lambda_{i,n-k}^{(k)}}{(x_{j,n}-y_{i,n-k})^2} = 
   \frac{1}{p_{k-1}^2(x_{j,n})} \left( a_1  \frac{p_n'(x_{j,n})}{p_{n-1}^{(1)}(x_{j,n})} - \sum_{i=0}^{k-1} p_i^2(x_{j,n}) \right). 
\end{equation}
From \eqref{parfrac} we find
\[   \frac{p_{n-1}^{(1)}(x_{j,n})}{p_n'(x_{j,n})} = a_1 \lambda_{j,n}, \]
so that inserting \eqref{sum1} and \eqref{sum2} in  \eqref{sumcij} gives  
\[  \sum_{i=1}^n c_{i,j} = 1, \]
which is what we wanted to prove.
\end{proof}

\begin{corollary}
Suppose $\supp(\mu) \subset [a,b]$ and that $f$ is a convex function on $[a,b]$, then
\[  \sum_{i=1}^{k-1} f(x_{i,k-1}) + \sum_{i=k}^{n-1} f(y_{i,n-k}) + f(b_{k-1}) \leq \sum_{j=1}^n f(x_{j,n}).  \]
\end{corollary}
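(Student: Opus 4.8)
The plan is to copy, \emph{mutatis mutandis}, the proofs of the corollaries following Theorems~\ref{thm:2.1} and~\ref{thm:3.1}, now using the doubly stochastic matrix $C$ provided by Theorem~\ref{thm:4.1}. Since $C=(c_{i,j})_{i,j=1}^n$ is doubly stochastic, each of its rows has nonnegative entries summing to $1$, so \eqref{xy=Cx} displays each of the numbers $x_{i,k-1}$ ($1\le i\le k-1$), $y_{i,n-k}$ ($k\le i\le n-1$) and $b_{k-1}$ as a convex combination of $x_{1,n},\ldots,x_{n,n}$. The zeros of $p_n$ lie in the convex hull of $\supp(\mu)\subset[a,b]$, hence in $[a,b]$, so $f$ is convex at each $x_{j,n}$; this also shows, incidentally, that the left-hand entries lie in $[a,b]$, so the statement makes sense.

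Next I would apply Jensen's inequality row by row: convexity of $f$ together with \eqref{xy=Cx} gives
\[
 f(x_{i,k-1})\le\sum_{j=1}^n c_{i,j}f(x_{j,n})\ \ (1\le i\le k-1),\qquad
 f(y_{i,n-k})\le\sum_{j=1}^n c_{i,j}f(x_{j,n})\ \ (k\le i\le n-1),
\]
and $f(b_{k-1})\le\sum_{j=1}^n c_{n,j}f(x_{j,n})$. Summing these $n$ inequalities and interchanging the order of summation yields $\sum_{j=1}^n\bigl(\sum_{i=1}^n c_{i,j}\bigr)f(x_{j,n})$ on the right; by the column-sum identity $\sum_{i=1}^n c_{i,j}=1$, which was established in the final part of the proof of Theorem~\ref{thm:4.1}, this equals $\sum_{j=1}^n f(x_{j,n})$, and the corollary follows.

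Alternatively one can argue purely through majorization: \eqref{xy=Cx} together with the double stochasticity of $C$ shows, by the theorem of Hardy, Littlewood and P\'olya recalled in the introduction, that the vector $(x_{1,k-1},\ldots,x_{k-1,k-1},y_{k,n-k},\ldots,y_{n-1,n-k},b_{k-1})$ is majorized by $(x_{1,n},\ldots,x_{n,n})$, whence the inequality is the classical characterization of majorization by convex functions \cite{Marshall}. In both versions the sole nontrivial input is that $C$ is doubly stochastic, which is exactly Theorem~\ref{thm:4.1}; so there is no genuine obstacle here, and the only point needing a word of justification is that the zeros of $p_n$ lie in $[a,b]$, which is classical.
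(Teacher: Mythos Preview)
Your proposal is correct and follows exactly the approach the paper uses for the analogous corollary after Theorem~\ref{thm:2.1}: apply Jensen's inequality to each row of \eqref{xy=Cx} using that $C$ has nonnegative entries with row sums $1$, then sum and use the column-sum identity $\sum_{i=1}^n c_{i,j}=1$ from Theorem~\ref{thm:4.1}. The paper itself gives no separate proof for this corollary (nor for the one after Theorem~\ref{thm:3.1}), so your write-up is precisely what is intended.
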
 

\section*{Acknowledgment}
This paper was inspired by the plenary talk of Vilmos Totik at the XII International Conference on Approximation and Optimization in
the Caribbean (Havana, Cuba, 5--10 June, 2016). I'd like to thank Vilmos Totik for his encouragement.

\begin{verbatim}
Walter Van Assche
Department of Mathematics
KU Leuven
Celestijnenlaan 200B box 2400
BE-3001 Leuven
BELGIUM
walter@wis.kuleuven.be
\end{verbatim}

\begin{thebibliography}{9}
\bibitem{Gautschi} W. Gautschi,
\textit{Orthogonal Polynomials: Computation and Approximation},
Numerical Mathematics and Scientific Computation, Oxford University Press, Oxford, 2004.
\bibitem{HornJohnson} R.A. Horn, C.R. Johnson,
\textit{Matrix Analysis},
Cambridge University Press, Cambridge, 1985.
\bibitem{Malamud0} S.M. Malamud,
\textit{An analog of the Poincar\'e separation theorem for normal matrices and the Gauss-Lucas theorem},
Funktsional. Anal. i Prilozhen.  \textbf{37}  (2003),  no. 3, 85--88 (in Russian);  
translation in  Funct. Anal. Appl.  \textbf{37}  (2003),  no. 3, 232--235. 
\bibitem{Malamud1} S.M. Malamud,
\textit{Inverse spectral problem for normal matrices and the Gauss-Lucas theorem},
Trans. Amer. Math. Soc. \textbf{357} (2005), nr.~10, 4043--4064.
\bibitem{Marshall} A.W. Marshall, I. Olkin, B.C. Arnold,
\textit{Inequalities: Theory of Majorization and its Applications}, 2nd edition,
Springer Series in Statistics, Springer, New York, 2011. 
\bibitem{Pereira} R. Pereira,
\textit{Differentiators and the geometry of polynomials},
J. Math. Anal. Appl. \textbf{285} (2003), 336--348.
\bibitem{Szego} G. Szeg\H{o},
\textit{Orthogonal Polynomials},
Amer. Math. Soc. Colloq. Publ. \textbf{23}, AMS, Providence, RI, 1939 (fourth edition, 1975).
\bibitem{WVA} W. Van Assche,
\textit{Orthogonal polynomials, associated polynomials and functions of the second kind},
J. Comput. Appl. Math. \textbf{37} (1991), 237--249.
\end{thebibliography}
\end{document}